\theoremstyle{definition}
\newtheorem{ntn}{Notation}[section]
\theoremstyle{plain}
\newtheorem{lem}[ntn]{Lemma}
\newtheorem{prp}[ntn]{Proposition}
\newtheorem{thm}[ntn]{Theorem}
\theoremstyle{remark}
\newtheorem{rem}[ntn]{Remark}
\def\floor[#1]{\lfloor #1 \rfloor }
\newcommand{\z}{\mathbb{Z}}
\newcommand{\lan}{\langle}
\newcommand{\ran}{\rangle}
\newcommand{\GL}{\mathit{{\rm GL}}}
\newcommand{\SL}{\mathit{{\rm SL}}}
\newcommand{\E}{\mathcal{E}}
\newcommand{\DD}{{\mathfrak{d}}}
\newcommand{\ppp}{\mathfrak{p}}
\renewcommand{\H}{\tilde{H}}
\newcommand{\ee}{\mathcal{E}}
\newcommand{\inc}{{\rm inc}}
\newcommand{\id}{{\rm id}}
\newcommand{\tors}{{{\rm Tor}_1^{\z}}}
\newcommand{\si}{\sigma}
\newcommand{\arr}{\rightarrow}
\newcommand{\larr}{\longrightarrow}
\newcommand{\se}{\subseteq}
\newcommand{\mt}{\mapsto}
\newcommand{\fff}{{F^\ast}}
\renewcommand{\char}{{\rm char}}
\newcommand{\diag}{{\rm diag}}
\renewcommand{\ker}{{\rm ker}}
\newcommand{\coker}{{\rm coker}}
\newcommand{\im}{{\rm im}}
\newcommand{\ind}{{\rm ind}}
\newcommand {\mtx}[4]
{\left(
\begin{array}{cc}
#1 & #2   \\
#3 & #4
\end{array}
\right)}
\newtheoremstyle{athm}
  {}
  {}
  {\itshape}
  {}
  {\scshape}
  {}
  {.5em}
  {\thmnote{#3}}
\theoremstyle{athm}
\newtheorem*{athm}{}
\begin{document}

\title{Third homology of $\SL_2$ and the indecomposable $K_3$}
\author{Behrooz Mirzaii}
\begin{abstract}
It is known that, for an infinite field $F$, the indecomposable part
of $K_3(F)$ and the third homology of $\SL_2(F)$ are closely related.
In fact, there is a  canonical map $\alpha: H_3(\SL_2(F),\z)_\fff \arr K_3(F)^\ind$.
Suslin has raised the question that, is $\alpha$ an isomorphism \cite{sah1989}?
Recently Hutchinson and Tao
have shown that this map is surjective \cite{hutchinson-tao2009}. They also gave
some arguments about its injectivity.
In this article, we improve their arguments and show that $\alpha$ is bijective
if and only if the natural maps $H_3(\GL_2(F), \z)\arr H_3(\GL_3(F), \z)$ and
$H_3(\SL_2(F), \z)_\fff \arr H_3(\GL_2(F), \z)$ are injective.
\end{abstract}
\maketitle

\section*{Introduction}

For an infinite field $F$, Suslin has proved that the Hurewicz homomorphism
\[
h_3: K_3(F)=\pi_3(B\SL(F)^+) \larr H_3(B\SL(F)^+, \z)\simeq H_3(\SL(F), \z)
\]
is surjective  with 2-torsion kernel. In fact, he has shown that $h_3$
sits in the exact sequence
\[
K_2(F) \overset {l(-1)}{\larr} K_3(F) \larr H_3(\SL(F), \z) \larr 0,
\]
where the homomorphism ${l(-1)}: K_2(F) {\arr} K_3(F)$ coincides with multiplication
by $l(-1) \in K_1(\z)$ \cite[Lemma 5.2, Corollary 5.2]{suslin1991}. Let
\[
\alpha:
H_0(\fff,H_3(\SL_2(F),\z))\arr K_3(F)^\ind
\]
be the composition of the following sequence of homomorphisms
\begin{gather*}
\hspace{-1 cm}
H_0(\fff, H_3(\SL_2(F), \z)) \overset{\inc_\ast}{\larr} H_3(\SL(F), \z)
\overset{{\bar{h}}_3^{-1}}{\overset{\simeq}{\larr}} K_3(F)/l(-1)K_2(F) \\
\hspace{+6.5 cm}
\overset{p}{\larr} K_3(F)^\ind := K_3(F)/K_3^M(F),
\end{gather*}
where $\inc_\ast$ is induced by the inclusion $\inc: \SL_2(F) \arr \SL(F)$,
and $p$ is induced by the inclusion $l(-1)K_2(F) \se \im(K_3^M(F)\arr K_3(F))$.
For algebraically closed fields, it was known that $\alpha$ is an isomorphism
\cite{bloch2000}, \cite{sah1989}. Following this, Suslin raised the following
question:\\
\\
{\bf Question} (Suslin).
Is it true that $H_0(\fff, H_3(\SL_2(F), \z))$ coincides with $K_3(F)^\ind$?
(See \cite[Question 4.4]{sah1989}).
\\

In other words, is $\alpha$
bijective for an arbitrary infinite field $F$?
This question is true after killing 2-power torsion elements (i.e. after tensoring the
both side of this map with $\z[1/2]$) or when $\fff=\fff^2=\{a^2|a \in \fff\}$
\cite[Proposition 6.4]{mirzaii-2008}.

Recently Hutchinson and Tao have proved that $\alpha$
is surjective  \cite[Lemma 5.1]{hutchinson-tao2009}. At the end of their article,
they also stated that the injectivity of $\alpha$ is equivalent to the following three
conditions:\\
\par 1) $\im(H_3(\SL_2(F), \z) \arr H_3(\SL(F), \z)) \simeq K_3(F)^\ind$,\\
\par 2) $H_3(\GL_2(F), \z)\arr H_3(\GL_3(F), \z)$ is injective,\\
\par 3) $H_0(\fff, H_3(\SL_2(F), \z)) \arr H_3(\GL_2(F), \z)$ is injective.\\

Among these conditions, the first one seems very unnatural. It is also very
strong condition. In this article we show that this condition
can be dropped from this list. In fact we prove the following result.

\begin{athm}[{\bf Theorem.}]
Let $F$ be an infinite field. The following conditions are equivalent.\vspace{2mm}
\par {\rm (i)} The homomorphism $\alpha: H_0(\fff, H_3(\SL_2(F), \z)) \arr K_3(F)^\ind$
is bijective.\vspace{2mm}
\par {\rm (ii)} The natural homomorphisms $H_3(\GL_2(F), \z)\arr H_3(\GL_3(F), \z)$ and
$H_0(\fff, H_3(\SL_2(F), \z)) \arr H_3(\GL_2(F), \z)$ are injective.
\end{athm}

We should mention that in the mean time we also establish that the kernel of
the homomorphism
\[
H_3(\inc): H_3(\GL_2(F), \z)\arr H_3(\GL_3(F), \z)
\]
is equal to
\[
\im(H_3(\SL_2(F), \z) \arr H_3(\GL_2(F), \z)) \cap \fff \cup H_3(\GL_1(F), \z)),
\]
where the cup product is induced by the natural diagonal inclusion
$\inc: \fff \times \GL_1(F) \arr \GL_2(F)$. In another article
we have shown that the kernel of $H_3(\inc)$ is a 2-torsion  group \cite{mirzaii-2011}
(see Theorem \ref{kernel} below).

It seems that, for an arbitrary field, not much is known about the kernel of
\[
H_0(\fff, H_3(\SL_2(F), \z)) \arr H_3(\GL_2(F), \z),
\]
except that it is a 2-power torsion group
(see proof of Theorem 6.1 in \cite{mirzaii-2008}).

\subsection*{Notation}
In this article by $H_i(G)$ we mean  the  homology of group $G$
with integral coefficients, namely $H_i(G, \z)$.
By $\GL_n$ (resp. $\SL_n$) we mean the general (resp. special) linear
group $\GL_n(F)$ (resp. $\SL_n(F)$), where  $F$ is an infinite field.
If $A \arr A'$ is a homomorphism of abelian groups, by $A'/A$ we
mean $\coker(A \arr A')$ and we take other liberties of this kind.
Here by $\Sigma_n$ we mean the symmetric group of rank $n$.

\section{The group $H_1(\fff, H_2(\SL_2))$}

We start this section by looking at the corresponding Lyndon/Hochschild-Serre spectral
sequence of the commutative diagram of extensions
\[
\begin{CD}
1 \larr & \SL_2 & \larr & \GL_2 & \overset{\det}{\larr}  & \fff& \larr 1 \\
 &  @VVV  @VVV  @VVV & \\
1 \larr & \SL_3 & \larr & \GL_3 & \overset{\det}{\larr} & \fff& \larr 1.
\end{CD}
\]
So we get a morphism of spectral sequences
\[
\begin{CD}
\E_{p, q}^2=H_p(\fff, H_q(\SL_2)) &\Longrightarrow &H_{p+q}(\GL_2),\\
 @VVV  @VVV \\
E_{p, q}^2=H_p(\fff, H_q(\SL_3)) &\Longrightarrow &H_{p+q}(\GL_3).
\end{CD}
\]

By an easy analysis of this spectral sequence we obtain the
following commutative diagram with exact rows
\[
\begin{CD}
  & H_3(\SL_2)_\fff & \larr & H_3(\GL_2)/H_3(\GL_1) & \overset{\varphi}{\larr}
  & H_1(\fff, H_2(\SL_2)) \larr 0 \\
 &  @VVV  @VVV @VVV & \\
0 \larr & H_3(\SL_3)_\fff & \larr & H_3(\GL_3)/ H_3(\GL_1) & \overset{\psi}{\larr}
& H_1(\fff, H_2(\SL_3))  \larr 0.
\end{CD}
\]

The following theorem is due to Hutchinson and Tao
\cite[Theorem 3.2]{hutchinson-tao2009}, which is very fundamental in their
proof of the surjectivity of $\alpha$.

\begin{thm}\label{hutch-tao}
 The inclusion $\SL_2 \larr \SL_3$ induces a short exact sequence
\begin{gather*}\label{hutch-tao1}
0 \larr H_1(\fff, H_2(\SL_2))\larr H_1(\fff, H_2(\SL_3)) \larr k_3^M(F) \larr 0,
\end{gather*}
where $k_3^M(F):=K_3^M(F)/2$.
\end{thm}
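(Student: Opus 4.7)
The plan is to identify both sides of the asserted exact sequence and verify exactness directly, using Suslin's homology stability plus Matsumoto's theorem on the $\SL_3$ side, a concrete $\z[\fff]$-module presentation of $H_2(\SL_2(F))$ on the $\SL_2$ side, and matching the cokernel explicitly with $k_3^M(F)$.

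First I would pin down the right-hand term. Homology stability for $\SL_n$ and Matsumoto's theorem together yield $H_2(\SL_n(F)) \cong K_2^M(F)$ for $n \ge 3$, with $H_2(\SL_2(F)) \two K_2^M(F)$ surjective. Since the $\GL_n$-conjugation action on $H_\ast(\SL_n)$ factors through $\fff = \GL_n/\SL_n$, and conjugation by $\diag(1,\ldots,1,a)$ fixes the upper-left $\SL_{n-1}$ pointwise, the $\fff$-action on $H_2(\SL_n(F))$ is trivial for $n \ge 3$; by universal coefficients this gives $H_1(\fff, H_2(\SL_3(F))) \cong \fff \otimes_\z K_2^M(F)$.

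Next I would construct the surjection $\fff \otimes_\z K_2^M(F) \two k_3^M(F)$ as the mod-$2$ reduction of the Milnor product $a \otimes \{b,c\} \mt \{a,b,c\}$; surjectivity is immediate since $K_3^M(F)$ is generated by length-three symbols. One then needs that the image of $H_1(\fff, H_2(\SL_2(F))) \arr \fff \otimes_\z K_2^M(F)$ lies in the kernel, which I would check by lifting both the Steinberg identity $\{1-a,a\}=0$ and the mod-$2$ relation $2\{a,b,c\}=0$ to relations in $H_2(\SL_2(F))$ coming from the $\SL_2$ Weyl-group involution together with the Matsumoto-style presentation.

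The main step, and the principal obstacle, is to prove that the map $H_1(\fff, H_2(\SL_2(F))) \arr H_1(\fff, H_2(\SL_3(F)))$ is injective and that its image is exactly the kernel of the surjection onto $k_3^M(F)$. For this one needs a concrete $\fff$-equivariant presentation of $H_2(\SL_2(F))$, for instance via Hutchinson's refined pre-Bloch group (equivalently, a Milnor--Witt $K_2$ type description). With such a presentation one computes $H_1(\fff, H_2(\SL_2(F)))$ as a quotient of $\fff \otimes_\z K_2^M(F)$ modulo precisely the symmetry and mod-$2$ relations cutting out the kernel of the Milnor multiplication into $k_3^M(F)$. Injectivity is then verified via the long exact sequence in $\fff$-homology attached to $0 \arr K \arr H_2(\SL_2(F)) \arr K_2^M(F) \arr 0$, by showing that $H_1(\fff, K) \arr H_1(\fff, H_2(\SL_2(F)))$ is the zero map; this ultimately reflects the Milnor--Witt / $2$-torsion nature of $K$ and the compatibility of its $\fff$-action with the Milnor product, which is the delicate point of the argument.
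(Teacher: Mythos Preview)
The paper does not prove this theorem: it is quoted verbatim as a result of Hutchinson and Tao \cite[Theorem~3.2]{hutchinson-tao2009}, with no argument supplied beyond the observation (immediately following the statement) that $H_1(\fff,H_2(\SL_3))\simeq \fff\otimes K_2^M(F)$ and that the right-hand map is induced by the Milnor product. So there is no ``paper's own proof'' to compare your attempt against; the author treats the theorem as a black box.

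That said, your sketch has a genuine gap at precisely the point you flag as delicate. The injectivity of $H_1(\fff,H_2(\SL_2))\to \fff\otimes K_2^M(F)$ and the identification of the image cannot be obtained from the long exact sequence of $0\to K\to H_2(\SL_2)\to K_2^M(F)\to 0$ alone: one needs real control over $K$ as an $\fff$-module, and no such description is available a priori. Your proposal to use ``a concrete $\fff$-equivariant presentation of $H_2(\SL_2(F))$, for instance via Hutchinson's refined pre-Bloch group'' is circular, since that presentation is essentially what Hutchinson and Tao establish in the course of proving this very theorem; it is not an off-the-shelf input. In their paper the argument runs instead through an explicit $\GL_2$-complex built from configurations of points on $\pp^1$, whose spectral sequence simultaneously computes $H_2(\SL_2)_{\fff}$ and the connecting map, and the $2$-torsion phenomenon producing $k_3^M(F)$ rather than $K_3^M(F)$ emerges from an explicit Weyl-group calculation there. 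Your outline correctly identifies the shape of the answer and the surjection onto $k_3^M(F)$, but it does not supply the mechanism that makes the left-hand map injective.
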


Since the action of $\fff$ on $H_2(\SL_3)$
is trivial,
\[
H_1(\fff, H_2(\SL_3))\simeq \fff \otimes K_2^M(F).
\]
So we consider $H_1(\fff, H_2(\SL_2))$ as a subgroup of $\fff \otimes K_2^M(F)$.
It is easy to see that the map
\[
H_1(\fff, H_2(\SL_3)) \larr k_3^M(F)
\]
is induced by the natural product map $\fff \otimes K_2^M(F) \larr K_3^M(F)$.
Since  the $n$-th Milnor $K$-group, $K_n^M(F)$, is naturally isomorphic to the
$n$-th tensor of $\fff$ modulo the two families of relations
\[
a_1 \otimes \dots \otimes a_{n-1} \otimes (1-a_{n-1}), \ \ \ \ \ \ \
a_i \in  \fff, a_{n-1} \neq 1,
\]
\[
a_1 \otimes \cdots  a_i \otimes a_{i+1} \cdots \otimes a_n +
a_1 \otimes \cdots  a_{i+1} \otimes a_{i} \cdots \otimes a_n,
\ \  a_i \in  \fff,
\]
it easily follows that the kernel of the product map $\fff \otimes K_2^M(F)\larr K_3^M(F)$
is generated by elements $a \otimes \{b,c\} + b \otimes \{a, c\}$.
This proves the following lemma.

\begin{lem}\label{subgroup}
As a subgroup of
$H_1(\fff, H_2(\SL_3))=\fff \otimes K_2^M(F)$, the group $H_1(\fff, H_2(\SL_2))$
is generated by elements
$a \otimes \{b,c\} + b \otimes \{a, c\}$ and  $2d \otimes \{e,f\}$.
\end{lem}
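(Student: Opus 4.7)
The plan is to combine Theorem~\ref{hutch-tao} with the explicit description of the kernel of the product map $\fff \otimes K_2^M(F) \arr K_3^M(F)$ that is recorded in the paragraph immediately above the lemma. Using the identification $H_1(\fff, H_2(\SL_3)) \simeq \fff \otimes K_2^M(F)$, Theorem~\ref{hutch-tao} realises $H_1(\fff, H_2(\SL_2))$ as the kernel of the composition
\[
\fff \otimes K_2^M(F) \arr K_3^M(F) \arr K_3^M(F)/2 = k_3^M(F),
\]
where the first arrow sends $a \otimes \{b,c\}$ to $\{a,b,c\}$.

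First I would verify that both proposed families of generators lie in this kernel: the element $a \otimes \{b,c\} + b \otimes \{a,c\}$ is already annihilated by the first arrow, by the description recalled in the paragraph above, while $2d \otimes \{e,f\}$ maps to $2\{d,e,f\}$, which vanishes modulo~$2$.

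For the converse inclusion I would take any $\xi$ in the kernel of the composition. Its image in $K_3^M(F)$ is then $2$-divisible, hence can be written as $\sum_i 2\{d_i, e_i, f_i\}$, which in turn is the image under the product map of $\sum_i 2 d_i \otimes \{e_i, f_i\}$. Thus $\xi - \sum_i 2 d_i \otimes \{e_i, f_i\}$ lies in $\ker(\fff \otimes K_2^M(F) \arr K_3^M(F))$, so by the description recalled above it is a sum of elements of the form $a \otimes \{b,c\} + b \otimes \{a,c\}$. Rearranging yields the desired expression for $\xi$.

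I do not anticipate a genuine obstacle here: once Theorem~\ref{hutch-tao} and the explicit generators of $\ker(\fff \otimes K_2^M(F) \arr K_3^M(F))$ are in hand, the lemma is essentially bookkeeping. The only point requiring care is the correct identification of the map $H_1(\fff, H_2(\SL_3)) \arr k_3^M(F)$ with the Milnor product followed by reduction modulo~$2$, which has already been justified in the text preceding the statement.
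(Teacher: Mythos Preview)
Your proposal is correct and follows exactly the route the paper takes: the paragraph preceding the lemma identifies $H_1(\fff,H_2(\SL_2))$ with the kernel of $\fff\otimes K_2^M(F)\arr k_3^M(F)$ via Theorem~\ref{hutch-tao}, records that $\ker(\fff\otimes K_2^M(F)\arr K_3^M(F))$ is generated by the symmetrised tensors, and then simply declares ``This proves the following lemma.'' Your write-up just spells out the bookkeeping the paper leaves implicit.
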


To go further, we need to introduce some notations. Let $G$ be a group and set
\[
{\rm \bf{c}}({g}_1, {g}_2,\dots, {g}_n):=\sum_{\si \in S_n} {{\rm
sign}(\si)}[{g}_{\si(1)}| {g}_{\si(2)}|\dots|{g}_{\si(n)}] \in
H_n(G),
\]
where ${g}_i \in G$ pairwise commute and $S_n$ is the symmetric
group of degree $n$. Here we use the bar resolution of $G$
\cite[Chapter I, Section 5]{brown1994} to define the homology of $G$.

\begin{lem}
Let $G$ and $G'$ be two groups.
\par {\rm (i)} If $h_1\in G$ commutes with all the elements
$g_1, \dots, g_n \in G$, then
\[
{\rm \bf{c}}(g_1h_1, g_2,\dots, g_n)= {\rm \bf{c}}(g_1, g_2,\dots,
g_n)+{\rm \bf{c}}(h_1, g_2,\dots, g_n).
\]
\par {\rm (ii)}
For every $\sigma \in S_n$, ${\rm \bf{c}}(g_{\sigma(1)},\dots,
g_{\sigma(n)})={\rm sign(\sigma)} {\rm \bf{c}}(g_1,\dots,
g_n)$.
\par {\rm (iii)}
The cup product of ${\rm \bf{c}}(g_1,\dots, g_p)\in H_p(G)$
and ${\rm \bf{c}}(g_1',\dots, g_q') \in H_q(G')$ is ${\rm
\bf{c}}((g_1, 1), \dots, (g_p,1),(1,g_1'), \dots, (1,g_q')) \in
H_{p+q}(G \times G')$.
\end{lem}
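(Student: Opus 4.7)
The plan is to handle the three parts in the order (ii), (iii), (i), with the last being by far the most delicate point. For (ii), I would simply substitute $\tau \mapsto \si\tau$ in the defining sum
\[
{\rm \bf{c}}(g_{\si(1)},\dots, g_{\si(n)})=\sum_{\tau \in S_n} {\rm sign}(\tau) [g_{\si\tau(1)}|\cdots|g_{\si\tau(n)}],
\]
and use ${\rm sign}(\si\tau)={\rm sign}(\si){\rm sign}(\tau)$ to pull the factor of ${\rm sign}(\si)$ out, recovering ${\rm sign}(\si){\rm \bf{c}}(g_1,\dots, g_n)$.

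For (iii), the key tool is the Eilenberg--Zilber shuffle formula for the cross product on the bar resolution: at the chain level, $[g_1|\cdots|g_p]\times [g'_1|\cdots|g'_q]$ is the signed sum over $(p,q)$-shuffles $\mu$ of the interleaved chain with entries $(g_1,1),\dots,(g_p,1),(1,g'_1),\dots,(1,g'_q)$ permuted by $\mu$. Inserting the two antisymmetrisations and using the bijection $S_{p+q}\simeq (S_p\times S_q)\cdot {\rm Sh}(p,q)$ (a set-theoretic product with matching signs) collapses the triple sum into $\sum_{\rho\in S_{p+q}}{\rm sign}(\rho)[\cdots]$, which is exactly ${\rm \bf{c}}((g_1,1),\dots,(g_p,1),(1,g'_1),\dots,(1,g'_q))$ in $H_{p+q}(G\times G')$.

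The main obstacle is (i). My plan is to reduce to the abelian setting: since $g_1,h_1,g_2,\dots,g_n$ pairwise commute, all three cycles appearing in (i) lie in the image of $H_n(A)\arr H_n(G)$, where $A\se G$ is the abelian subgroup they generate. For such $A$ one can show, by induction on $n$ using the shuffle formula of (iii), that ${\rm \bf{c}}(a_1,\dots,a_n)$ equals the iterated Pontryagin product $[a_1]\cdot [a_2]\cdots [a_n]$ under the natural map $\bigwedge^n A \arr H_n(A)$, the base case being the tautology $[a]\in H_1(A)=A$. The identity in (i) then follows immediately from $[g_1h_1]=[g_1]+[h_1]$ in $H_1(A)=A$ and the bilinearity of the Pontryagin product. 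Alternatively, and more self-containedly, one may carry out an explicit chain-level calculation, exhibiting
\[
{\rm \bf{c}}(g_1h_1,g_2,\dots,g_n)-{\rm \bf{c}}(g_1,g_2,\dots,g_n)-{\rm \bf{c}}(h_1,g_2,\dots,g_n)
\]
as the boundary of an explicit $(n+1)$-chain in the bar complex built from the extra commuting letter $h_1$ inserted in various positions; already the case $n=2$, where one verifies the claim by hand from $\partial[g_1|h_1|g_2]$ and its symmetric variants, motivates the general formula.
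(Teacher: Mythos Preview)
Your plan is correct. In fact, the paper gives no argument at all here: its proof reads in full ``The proof follows from direct computations, so we leave it to the interested readers.'' So there is nothing to compare against, and your write-up already supplies more than the paper does.

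A couple of remarks. Your treatment of (ii) and (iii) is the standard one and is exactly the sort of ``direct computation'' the author has in mind. For (i), both of your routes are fine; the Pontryagin-product reduction is the cleaner of the two and is implicitly used elsewhere in the paper (see the appeal to the map $\bigwedge_\z^3 \fff \hookrightarrow H_3(\fff)$, $a\wedge b\wedge c\mapsto {\rm \bf c}(a,b,c)$, in the proof of Proposition~\ref{kernel2}). The only thing to check in that reduction is that the hypotheses guarantee \emph{all} of $g_1,h_1,g_2,\dots,g_n$ pairwise commute, which they do: the $g_i$ already commute pairwise (needed for ${\rm \bf c}(g_1,\dots,g_n)$ to be defined), and $h_1$ commutes with each $g_i$ by assumption. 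After that, multilinearity of the Pontryagin product in $H_\ast(A)$ gives the identity immediately.
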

\begin{proof}
The proof follows from direct computations, so we leave it to the
interested readers.
\end{proof}

\section{The kernel of $H_3(\GL_2)\larr H_3(\GL_3)$}

For simplicity, in the rest of this article, we use the following notation
\[
k_{a,b,c}:={\rm \bf{c}} (\diag(a,1), \diag(1,b),\diag(1,c)) \in H_3(\GL_2).
\]

The following theorem has been proved in \cite[Theorem 3.1]{mirzaii-2011}.

\begin{thm}\label{kernel}
The kernel of ${\inc_1}_\ast:H_3(\GL_2) \larr H_3(\GL_3)$
consists of elements of the form $\sum k_{a,b,c}+k_{b,a,c}$ such that
\[
\sum  a\otimes \{b, c\}+b\otimes \{a, c\}=0 \in \fff \otimes K_2^M(F).
\]
In particular $\ker({\inc_1}_\ast) \subseteq \fff \cup H_2(\GL_1) \subseteq H_3(\GL_2)$,
where the cup product is induced by the natural diagonal inclusion
$\fff \times \GL_1 \larr \GL_2$. Moreover $\ker({\inc_1}_\ast)$ is a $2$-torsion group.
\end{thm}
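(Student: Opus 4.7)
The approach I would take combines the spectral sequence comparison already set up in the paper with Theorem \ref{hutch-tao} via a snake-lemma chase, together with an explicit chain-level computation on the generators $k_{a,b,c}$. First, observe that $H_3(\GL_1) \hookrightarrow H_3(\GL_3)$ is injective (since $\det \colon \GL_3 \arr \fff$ retracts the inclusion), so $\ker({\inc_1}_\ast) \cap H_3(\GL_1) = 0$ and the quotient map identifies $\ker({\inc_1}_\ast)$ with the kernel of $H_3(\GL_2)/H_3(\GL_1) \arr H_3(\GL_3)/H_3(\GL_1)$. Applying the diagram chase to the commutative diagram of exact rows in the paper, using Theorem \ref{hutch-tao} to get the injection $H_1(\fff, H_2(\SL_2)) \harr H_1(\fff, H_2(\SL_3))$ together with the injectivity of $H_3(\SL_3)_\fff \harr H_3(\GL_3)/H_3(\GL_1)$ coming from the bottom short-exact row, one concludes that this kernel is exactly the image in $H_3(\GL_2)/H_3(\GL_1)$ of $\ker(H_3(\SL_2)_\fff \arr H_3(\SL_3)_\fff)$.

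Next, I would make the map $\varphi$ explicit on the generators. Using the semi-direct product splitting $\GL_2 = \SL_2 \rtimes \fff$ via $a \mt \diag(a, 1)$, write $\diag(1, b) = \diag(b^{-1}, b) \cdot \diag(b, 1)$, and similarly for $\diag(1,c)$, to decompose the bar-resolution cocycle for $k_{a,b,c}$ into its $\SL_2$ and $\fff$ components. Matching this decomposition with the filtration from the LHS spectral sequence, the computation yields the symmetrized identity
\[
\varphi(k_{a,b,c} + k_{b,a,c}) = a \otimes \{b, c\} + b \otimes \{a, c\} \in H_1(\fff, H_2(\SL_2)) \subseteq \fff \otimes K_2^M(F).
\]
Combined with Lemma \ref{subgroup}, this shows that every element of $H_1(\fff, H_2(\SL_2))$ of the symmetric form $\sum (a \otimes \{b, c\} + b \otimes \{a, c\})$ has a preimage $\sum (k_{a,b,c}+k_{b,a,c}) \in H_3(\GL_2)/H_3(\GL_1)$ under $\varphi$.

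The main obstacle is establishing the two inclusions that characterize the kernel. For $(\supseteq)$, given a sum $\sum (k_{a,b,c}+k_{b,a,c})$ satisfying $\sum (a \otimes \{b, c\} + b \otimes \{a, c\}) = 0$ in $\fff \otimes K_2^M(F)$, its $\varphi$-image vanishes, so the sum lifts to $H_3(\SL_2)_\fff$; one must then verify that this lift dies in $H_3(\SL_3)_\fff$, which requires an explicit chain-level computation in $\SL_3$ exploiting the extra diagonal slot (decompositions such as $\diag(1,b,1) = \diag(1,b,b)\cdot\diag(1,1,b^{-1})$ together with Weyl-group conjugations of diagonal entries). For $(\subseteq)$, one must show that every element of $\ker(H_3(\SL_2)_\fff \arr H_3(\SL_3)_\fff)$ can be represented modulo $H_3(\GL_1)$ by such a symmetric sum; this relies on having explicit generators for the image of $H_3(\SL_2)_\fff$ inside $H_3(\GL_2)/H_3(\GL_1)$ and tracing the differential through the spectral sequence. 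The $2$-torsion conclusion then follows because $\sum (a \otimes \{b, c\} + b \otimes \{a, c\})$ maps to $\sum (\{a,b,c\}+\{b,a,c\}) = 0$ in $K_3^M(F)$ by antisymmetry, placing the kernel inside the $2$-torsion subgroup of $\fff \otimes K_2^M(F)$ captured by the antisymmetrization relation; doubling the symmetric relation and applying Lemma \ref{subgroup} closes out the argument.
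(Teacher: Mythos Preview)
This theorem is not proved in the present paper; it is quoted from \cite[Theorem~3.1]{mirzaii-2011}, so there is no in-paper argument to compare against. Your spectral-sequence reduction identifying $\ker({\inc_1}_\ast)$ with the image of $\ker\big(H_3(\SL_2)_\fff \to H_3(\SL_3)_\fff\big)$ in $H_3(\GL_2)/H_3(\GL_1)$ is correct, and the formula $\varphi(k_{a,b,c}+k_{b,a,c}) = a\otimes\{b,c\}+b\otimes\{a,c\}$ is exactly Lemma~\ref{theta}(i); note that the proof of that lemma in fact establishes the stronger identity ${\inc_1}_\ast(k_{a,b,c}+k_{b,a,c})=\Psi(a\otimes\{b,c\}+b\otimes\{a,c\})$, which already yields the inclusion $(\supseteq)$ directly, without your detour through a lift to $H_3(\SL_2)_\fff$ and a further check in $H_3(\SL_3)_\fff$.

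There are, however, two genuine gaps. The inclusion $(\subseteq)$ is the substance of the theorem, and you have not given an argument: saying it ``relies on having explicit generators for the image of $H_3(\SL_2)_\fff$'' names the difficulty rather than resolving it. The diagram chase tells you only that a kernel element lies in the image of $H_3(\SL_2)_\fff$; it gives no reason why, modulo $H_3(\GL_1)$, such an element must be a symmetric sum $\sum(k_{a,b,c}+k_{b,a,c})$. Producing that explicit form requires structural input about $H_3(\GL_2)$ beyond the spectral sequence (in \cite{mirzaii-2011} this comes from the analysis via complexes of unimodular vectors). Second, your $2$-torsion argument does not work as stated: the vanishing of $\sum(a\otimes\{b,c\}+b\otimes\{a,c\})$ in $\fff\otimes K_2^M(F)$ is the \emph{hypothesis} defining the kernel element, not a statement that the element of $H_3(\GL_2)$ is $2$-torsion, and Lemma~\ref{subgroup} describes generators of $H_1(\fff,H_2(\SL_2))$, not torsion in $H_3(\GL_2)$. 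Showing $2\sum(k_{a,b,c}+k_{b,a,c})=0$ under the stated constraint needs a separate identity in $H_3(\GL_2)$, which you have not supplied.
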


Let $\Psi$ and $\Phi$ be the following compositions,
\[
\fff \otimes K_2^M(F) \overset{\id_\fff \otimes \iota}{\larr}
\fff \otimes H_2(\GL_2) \overset{\cup}{\larr}
 H_3(\fff \times \GL_2) \overset{\inc_\ast}{\larr} H_3(\GL_3),
\]
\[
\fff \otimes K_2^M(F) \overset{\id_\fff \otimes \iota}{\larr}
\fff \otimes H_2(\GL_2) \overset{\cup}{\larr}
 H_3(\fff \times \GL_2) \overset{\beta_\ast}{\larr} H_3(\GL_2),
\]
respectively, where $\iota:K_2^M(F)\simeq H_2(\SL_2)_\fff \larr  H_2(\GL_2)$
is the natural inclusion given by the formula
$\{a,b\} \mt  {\rm \bf{c}} (\diag(a,1),\diag(b,b^{-1}))$ \cite[Proposition A.11]{elbaz1998}
and $\beta: \fff \times \GL_2 \larr \GL_2$  is given by $(a, A) \mt aA$.
It is easy to see that
\[
\Psi(a \otimes \{b,c\})={\rm \bf{c}} (\diag(a,1,1), \diag(1,b,1),\diag(1,c,c^{-1})),
\]
\[
\hspace{-1.1 cm}
\Phi(a \otimes \{b, c\})={\rm \bf{c}} (\diag(a,a), \diag(b,1),\diag(c,c^{-1})).
\]

\begin{lem}\label{theta}
Let $\Theta$ be the composition
\[
H_3(\GL_2)/H_3(\GL_1) \overset{\varphi}{\larr} H_1(\fff, H_2(\SL_2))
\hookrightarrow \fff\otimes K_2^M(F).
\]
Then\vspace{1.5mm}
\par {\rm (i)} $\Theta(k_{a,b,c}+k_{b,a,c})= a\otimes \{b,c\}+b \otimes \{a,c\}$,\vspace{1.5mm}
\par {\rm (ii)} $\Theta({\bf c}(\diag(a, a),\diag(b,1), \diag(c, c^{-1})))=
2a \otimes\{b,c\}$,\vspace{1.5mm}
\par {\rm (iii)} $\Theta(k_{c,a,b})= b\otimes \{a,c\}-a \otimes \{b,c\}$.
\end{lem}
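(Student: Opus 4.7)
The plan is to compute the edge map $\varphi$ on each of the three classes by exploiting the naturality square with the spectral sequence for $\SL_3\triangleleft\GL_3$:
\[
\begin{CD}
H_3(\GL_2)/H_3(\GL_1) @>\varphi>> H_1(\fff,H_2(\SL_2)) \\
@V{{\inc_1}_\ast}VV @VVV \\
H_3(\GL_3)/H_3(\GL_1) @>\psi>> H_1(\fff,H_2(\SL_3))
\end{CD}
\]
where the vertical arrows come from the block-diagonal inclusions. Because $\fff$ acts trivially on $H_2(\SL_3)\cong K_2^M(F)$, the bottom right identifies with $\fff\otimes K_2^M(F)$, and the right vertical arrow is injective by Theorem~\ref{hutch-tao}. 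Its image is precisely the subgroup used to define $\Theta$, so $\Theta=\psi\circ{\inc_1}_\ast$. This reduces all three computations to evaluating $\psi$ on explicit chains in $H_3(\GL_3)$.

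The crucial auxiliary step is the identity $\psi\circ\Psi=\id_{\fff\otimes K_2^M(F)}$: a direct verification shows that $\Psi(a\otimes\{b,c\})$ has trivial image in $E_{3,0}^\infty$ (the determinant sends $\iota(\{b,c\})$ to zero in $H_2(\fff)$, since the second factor has trivial determinant), hence lies in the filtration piece $F_1\subseteq H_3(\GL_3)$, and its image in $F_1/F_0\hookrightarrow\fff\otimes K_2^M(F)$ is precisely $a\otimes\{b,c\}$ under Matsumoto's identification of $\iota(\{b,c\})$ with $\{b,c\}$. With this in hand, for part~(iii) I would push $k_{c,a,b}$ forward to ${\bf c}(\diag(c,1,1),\diag(1,a,1),\diag(1,b,1))$ and decompose $\diag(1,x,1)=\diag(x^{-1},x,1)\cdot\diag(x,1,1)$ for $x=a,b$. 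Applying Lemma~2.2(i) yields four summands: one lying in $H_3(\GL_1)$ (three splitting factors, killed by the quotient), two of ``two splitting, one $\SL_3$'' type, and one of ``one splitting, two $\SL_3$'' type. After further identifying each of the three nontrivial summands with appropriate $\Psi$-classes modulo $F_0=\im(H_3(\SL_3)_\fff)$, I would collect contributions to obtain $\Theta(k_{c,a,b})=b\otimes\{a,c\}-a\otimes\{b,c\}$.

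For part~(ii), apply the same recipe to $\Phi(a\otimes\{b,c\})={\bf c}(\diag(a,a),\diag(b,1),\diag(c,c^{-1}))$ pushed to $H_3(\GL_3)$: using $\diag(a,a,1)=\diag(a,1,1)\cdot\diag(1,a,1)$ via Lemma~2.2(i), and then further decomposing $\diag(1,a,1)$, each of the two principal summands matches $\Psi(a\otimes\{b,c\})$ modulo terms killed by $\psi$, producing the factor of $2$. Finally, part~(i) falls out formally from part~(iii): relabelling variables in (iii) yields the general formula $\Theta(k_{x,y,z})=z\otimes\{y,x\}-y\otimes\{z,x\}$, so summing the formulas for $(a,b,c)$ and $(b,a,c)$ and invoking antisymmetry $\{y,x\}=-\{x,y\}$ of the Milnor symbol collapses the expression to $a\otimes\{b,c\}+b\otimes\{a,c\}$.

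The principal obstacle is the meticulous bookkeeping at each chain-level expansion: once each $\diag(x,y,z)$ is decomposed into an $\SL_3$-factor times a splitting $\fff$-factor, every summand of the resulting ${\bf c}$-expression must be classified as lying in $H_3(\GL_1)$, in the image of $H_3(\SL_3)_\fff$, in the filtration piece $F_2$ whose quotient $E_{2,1}^\infty=0$ vanishes but which may nonetheless carry a nontrivial $F_1/F_0$-contribution requiring a boundary correction, or as a class of $\Psi$-type contributing directly to $E_{1,2}^\infty$. Correctly pairing these with the defining formulas for $\Psi$ while tracking the signs from Lemma~2.2(i)--(ii) is where the bulk of the labor lies.
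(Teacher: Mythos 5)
Your overall framework coincides with the paper's: you reduce everything to $\Theta=\psi\circ{\inc_1}_\ast$ via the naturality square, and you use the splitting $\psi\circ\Psi=\id$ on $\fff\otimes K_2^M(F)$ to evaluate $\psi$ on classes expressed through $\Psi$. Your derivation of (i) from the general form of (iii) by summing $\Theta(k_{a,b,c})$ and $\Theta(k_{b,a,c})$ and using $\{b,a\}=-\{a,b\}$ is also correct algebra. The genuine gap lies in the two direct computations you defer to ``bookkeeping'': the decompositions you choose do not reduce to $\Psi$-classes. In (ii), expanding $\diag(a,a,1)=\diag(a,1,1)\cdot\diag(1,a,1)$ yields the summands ${\bf c}(\diag(a,1,1),\diag(b,1,1),\diag(c,c^{-1},1))$ and ${\bf c}(\diag(1,a,1),\diag(b,1,1),\diag(c,c^{-1},1))$, in each of which the determinant-one factor $\diag(c,c^{-1},1)$ occupies the same block as the splitting factors, so neither summand is of the form $\Psi(\cdot)$; the assertion that ``each of the two principal summands matches $\Psi(a\otimes\{b,c\})$ modulo terms killed by $\psi$'' is essentially the statement to be proved and is nowhere justified. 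Likewise in (iii), the expansion via $\diag(1,x,1)=\diag(x^{-1},x,1)\cdot\diag(x,1,1)$ produces terms with two splitting factors and one $\SL_3$-factor, and a term ${\bf c}(\diag(c,1,1),\diag(a^{-1},a,1),\diag(b^{-1},b,1))$ whose $E^\infty_{1,2}$-component requires identifying the class of ${\bf c}(\diag(a^{-1},a),\diag(b^{-1},b))$ in $K_2(F)$; none of these images under $\psi$ is determined by anything you have established, and pinning them down is essentially as hard as the original problem.

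For comparison, the paper avoids these difficulties by a different choice of decompositions and a reversed logical order. Part (i) is proved directly: writing $\diag(1,c,1)=\diag(1,c,c^{-1})\cdot\diag(1,1,c)$ produces $\Psi(a\otimes\{b,c\})$ on the nose plus a remainder which, after conjugation by a permutation matrix, antisymmetry, and one further decomposition, equals $\Psi(b\otimes\{a,c\})-{\inc_1}_\ast(k_{b,a,c})$, so the unwanted term is absorbed by symmetrizing in $a$ and $b$. Part (ii) uses $\diag(a,a,1)=\diag(a,a,a^{-2})\cdot\diag(1,1,a^2)$, which splits ${\inc_1}_\ast\Phi(a\otimes\{b,c\})$ into an honest $H_3(\SL_3)$-class (killed by $\psi$) plus $\Psi(a^2\otimes\{b,c\})$, giving the factor $2$ immediately. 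Part (iii) then follows purely formally from (i) and (ii) via the identity $\Phi(a\otimes\{b,c\})={\bf c}(\diag(a,1),\diag(b,1),\diag(c,1))-k_{c,a,b}+k_{a,b,c}+k_{b,a,c}$, which holds already in $H_3(\GL_2)$ by multilinearity of ${\bf c}$, with no further spectral-sequence input. To repair your argument you should either adopt decompositions of this kind, or supply the missing identifications of the $\psi$-images of your summands; as written, (ii) and (iii) are not proved, and (i) rests on the unproven (iii).
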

\begin{proof}
(i) It is easy to see that the exact sequence
\[
0 \larr H_3(\SL_3)_\fff\larr H_3(\GL_3)/H_3(\GL_1) \overset{\psi}{\larr}
H_1(\fff, H_2(\SL_3)) \larr 0
\]
splits canonically by the map
\[
\fff \otimes K_2^M(F)=H_1(\fff, H_2(\SL_3)) \larr H_3(\GL_3)/H_3(\GL_1)
\]
defined by $\Psi$. Now consider the commutative diagram
\[
\begin{CD}
& H_3(\GL_2)/H_3(\GL_1) & \overset{\varphi}{\larr}
  & H_1(\fff, H_2(\SL_2))  \\
 &  @VVV  @VVV & \\
 & H_3(\GL_3)/ H_3(\GL_1) & \overset{\psi}{\larr}
& H_1(\fff, H_2(\SL_3)).
\end{CD}
\]
We have
\[
\begin{array}{rl}
\vspace{1.5mm}
{\inc_1}_\ast(k_{a,b,c}) \!\!\!\!
&={\bf c}(\diag(a,1,1),\diag(1,b,1), \diag(1,c, 1))\\
\vspace{1.5mm}
&={\bf c}(\diag(a,1,1),\diag(1,b,1), \diag(1,c, c^{-1}))\\
\vspace{1.5mm}
&+{\bf c}(\diag(a,1,1),\diag(1,b,1), \diag(1,1,c))\\
\vspace{1.5mm}
&={\bf c}(\diag(a,1,1),\diag(1,b,1), \diag(1,c, c^{-1}))\\
\vspace{1.5mm}
&-{\bf c}(\diag(b,1,1),\diag(1,a,1), \diag(1,1,c))\\
\vspace{1.5mm}
&={\bf c}(\diag(a,1,1),\diag(1,b,1), \diag(1,c, c^{-1}))\\
\vspace{1.5mm}
&-{\bf c}(\diag(b,1,1),\diag(1,a,1), \diag(1,c^{-1},c))\\
\vspace{1.5mm}
&-{\bf c}(\diag(b,1,1),\diag(1,a,1), \diag(1,c, 1))\\
&=\Psi(a \otimes \{b,c\}+b \otimes \{a,c\})-{\inc_1}_\ast(k_{b,a,c}).
\end{array}
\]
Hence ${\inc_1}_\ast(k_{a,b,c}+k_{b,a,c})=\Psi(a \otimes \{b,c\}+
b \otimes \{a,c\})$. Therefore
\[
\begin{array}{rl}
\vspace{1.5mm}
\Theta(k_{a,b,c}+k_{b,a,c})\!\!\!\! &= \psi\circ {\inc_1}_\ast(k_{a,b,c}+k_{b,a,c})\\
\vspace{1.5mm}
&=\psi\circ\Psi(a \otimes \{b,c\}+b \otimes \{a,c\})\\
&= a \otimes \{b,c\}+b \otimes \{a,c\}.
\end{array}
\]

(ii) Consider the composition
\[
\fff \otimes K_2^M(F) \overset{\Phi}{\larr} H_3(\GL_2)/H_3(\GL_1)
\overset{\Theta}{\larr} \fff \otimes K_2^M(F).
\]
The image of $\Phi(a\otimes \{b,c\})=
{\bf c}(\diag(a, a),\diag(b,1), \diag(c, c^{-1}))$
in the group $H_3(\GL_3)/H_3(\GL_1)=H_3(\SL_3)_\fff \oplus \fff \otimes K_2^M(F)$
is equal to
\begin{gather*}
\begin{array}{rl}
\vspace{1.5mm}
{\inc_1}_\ast\circ\Phi(a\otimes \{b,c\})\!\!\!\!&=
{\bf c}(\diag(a, a,1),\diag(b,1,1), \diag(c, c^{-1},1))\\
\vspace{1.5mm}
&={\bf c}(\diag(a, a, a^{-2}),\diag(b,1,1), \diag(c, c^{-1},1))\\
\vspace{1.5mm}
& \ +{\bf c}(\diag(1, 1,a^2),\diag(b,1,1), \diag(c, c^{-1},1))\\
\vspace{1.5mm}
&={\bf c}(\diag(a, 1, a^{-1}), \diag(b, 1, b^{-1}),\diag(c, c^{-1},1))\\
\vspace{1.5mm}
&\ +{\bf c}(\diag(a^2, 1, 1), \diag(1, b, 1),\diag(1, c, c^{-1})).
\end{array}
\end{gather*}
Therefore
$\Theta\circ \Phi(a\otimes \{b,c\})=\psi\circ{\inc_1}_\ast \circ\Phi(a\otimes \{b,c\})
=2 a\otimes \{b,c\}$.

(iii) First note that
\[
\begin{array}{rl}
\vspace{1.5mm}
\Phi(a \otimes \{b,c\}) \!\!\!\!
&={\bf c}(\diag(a,a),\diag(b,1), \diag(c, c^{-1}))\\
\vspace{1.5mm}
&={\bf c}(\diag(a,1),\diag(b,1), \diag(c,1))\\
&\ \ \ -k_{c,a,b}+ k_{a,b,c}+k_{b,a,c}.
\end{array}
\]
Therefore
\[
\begin{array}{rl}
\vspace{1.5mm}
\Theta(k_{c,a,b}) \!\!\!\! &=\Theta(k_{a,b,c}+k_{b,a,c})- \Theta(\Phi(a \otimes \{b,c\}))\\
&=b\otimes \{a,c\}-a \otimes \{b,c\}.
\end{array}
\]
\end{proof}

\begin{prp}\label{kernel2}
Let ${\inc_2}_\ast: H_3(\SL_2)_\fff \larr H_3(\GL_2)$ be induced by the natural
map $\inc_2:\SL_2 \larr \GL_2$. Then
\[
\im({\inc_2}_\ast)\cap \Big(\fff \cup H_2(\GL_1)\Big)=
\ker({\inc_1}_\ast).
\]
\end{prp}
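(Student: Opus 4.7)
The plan is to prove both inclusions of Proposition~\ref{kernel2} separately, exploiting the splitting $H_3(\GL_k) = H_3(\GL_1) \oplus \ker(\det_\ast)$ for $k = 2, 3$ coming from the determinant retraction $\GL_k \to \fff$.

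For the inclusion $\ker({\inc_1}_\ast) \subseteq \im({\inc_2}_\ast) \cap (\fff \cup H_2(\GL_1))$: let $\xi \in \ker({\inc_1}_\ast)$. By Theorem~\ref{kernel}, $\xi \in \fff \cup H_2(\GL_1)$ and $\xi = \sum(k_{a,b,c} + k_{b,a,c})$ with $\sum a \otimes \{b,c\} + b \otimes \{a,c\} = 0$. By Lemma~\ref{theta}(i) this condition reads $\Theta(\bar\xi) = 0$; using Theorem~\ref{hutch-tao} (the injection $H_1(\fff, H_2(\SL_2)) \hookrightarrow \fff \otimes K_2^M(F)$), it becomes $\varphi(\bar\xi) = 0$. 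Exactness of the top row yields $\xi = {\inc_2}_\ast(\eta) + \zeta$ with $\eta \in H_3(\SL_2)$ and $\zeta$ in the image of $H_3(\GL_1) \hookrightarrow H_3(\GL_2)$. Since ${\inc_1}_\ast(\xi) = 0$ implies $\det_\ast(\xi) = 0$ (using $\det_{\GL_2} = \det_{\GL_3} \circ \inc_1$), and $\det_\ast \circ {\inc_2}_\ast = 0$ (since $\det \circ \inc_2$ is constant on $\SL_2$), we get $\det_\ast(\zeta) = 0$. But $\det_\ast$ restricts to the identity on $H_3(\GL_1)$ (via $\det \circ (\GL_1 \hookrightarrow \GL_2) = \id$), so $\zeta = 0$ and $\xi = {\inc_2}_\ast(\eta) \in \im({\inc_2}_\ast)$.

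For the inclusion $\im({\inc_2}_\ast) \cap (\fff \cup H_2(\GL_1)) \subseteq \ker({\inc_1}_\ast)$: let $\xi = {\inc_2}_\ast(\eta) = \sum n_i k_{a_i, b_i, c_i}$; then $\xi \in \ker(\det_\ast)$ and $\Theta(\bar\xi) = 0$. Since the determinant splitting gives $\ker(\det_\ast) \simeq H_3(\GL_k)/H_3(\GL_1)$ compatibly under $\inc_1$, it suffices to show ${\inc_1}_\ast(\bar\xi) = 0$ in $H_3(\GL_3)/H_3(\GL_1)$. The commutative diagram together with $\varphi(\bar\xi) = 0$ places ${\inc_1}_\ast(\bar\xi)$ in the $H_3(\SL_3)_\fff$-summand of the bottom short exact sequence, and via the factorization through $\SL_3$ this class equals the image of $\tilde\eta = (\SL_2 \to \SL_3)_\ast \eta$ in $H_3(\SL_3)_\fff$. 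Using the expansion ${\inc_1}_\ast(k_{a,b,c}) = \Psi(a \otimes \{b,c\}) + \delta_{a,b,c}$ (with $\delta_{a,b,c} = \mathbf{c}(\diag(a,1,1), \diag(1,b,1), \diag(1,1,c))$, from the proof of Lemma~\ref{theta}(i)) together with the antisymmetry $\delta_{a,b,c} = -\delta_{b,a,c}$ in $H_3(\GL_3)$ (via conjugation-invariance under the transposition matrix in $\GL_3$ which fixes the third coordinate), the $H_3(\SL_3)_\fff$-class of ${\inc_1}_\ast(\bar\xi)$ reduces to $\sum n_i [\bar\delta_{a_i, b_i, c_i}]_{\SL_3}$; a direct manipulation using $\Theta(\bar\xi) = 0$ forces this sum to vanish, and the inner-action argument (conjugation by $\fff$ becomes trivial inside $\GL_3$) then lifts the vanishing to $(\SL_3 \to \GL_3)_\ast(\tilde\eta) = 0$ in $H_3(\GL_3)$.

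The main obstacle is the explicit vanishing of $\sum n_i [\bar\delta_{a_i, b_i, c_i}]_{\SL_3}$ in $H_3(\SL_3)_\fff$ in the $(\subseteq)$ direction, which requires careful bookkeeping of the $H_3(\SL_3)_\fff$-components of the $\tilde{k}_{a_i,b_i,c_i}$-terms and a combined use of the constraints $\xi \in \fff \cup H_2(\GL_1)$ and $\xi \in \im({\inc_2}_\ast)$. The $(\supseteq)$ direction is by contrast quite clean, resting on the determinant splitting and the trivial intersection $H_3(\GL_1) \cap \ker(\det_\ast) = 0$.
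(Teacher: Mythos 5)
Your first inclusion, $\ker({\inc_1}_\ast) \subseteq \im({\inc_2}_\ast) \cap \big(\fff \cup H_2(\GL_1)\big)$, is correct and essentially the paper's argument; in fact you are slightly more careful, since the paper passes from exactness of $H_3(\SL_2)_\fff \to H_3(\GL_2)/H_3(\GL_1) \to H_1(\fff,H_2(\SL_2)) \to 0$ directly to membership in $\im({\inc_2}_\ast)$, whereas you explicitly kill the possible $H_3(\GL_1)$-correction term using the determinant retraction. No complaints there.

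The reverse inclusion is where the proposition really lives, and there your argument has a genuine gap: the sentence ``a direct manipulation using $\Theta(\bar\xi)=0$ forces this sum to vanish'' is exactly the content to be proved and is not carried out. Note first that $\Theta(\bar\xi)=0$ holds for \emph{every} $\xi\in\im({\inc_2}_\ast)$, so if it formally forced ${\inc_1}_\ast(\bar\xi)=0$ then the map $H_3(\SL_2)_\fff \to H_3(\SL_3)_\fff$ would be zero, contradicting the surjectivity of $\alpha$ whenever $K_3(F)^\ind\neq 0$; hence the hypothesis $\xi\in\fff\cup H_2(\GL_1)$ must enter in an essential, computational way. Note also that your $\delta_{a,b,c}$ does not lie in the $H_3(\SL_3)_\fff$-summand: from ${\inc_1}_\ast(k_{a,b,c})=\Psi(a\otimes\{b,c\})+\delta_{a,b,c}$ one gets $\psi(\delta_{a,b,c})=\Theta(k_{a,b,c})-a\otimes\{b,c\}$, which is nonzero in general, so ``$[\bar\delta_{a_i,b_i,c_i}]_{\SL_3}$'' only names a projection and gives you nothing concrete to compute with. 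The missing idea --- and the way the paper closes the argument --- is to use $\det_\ast(x)=0$ together with the embedding $\bigwedge_\z^3\fff\hookrightarrow H_3(\fff)$, $a\wedge b\wedge c\mapsto {\rm \bf{c}}(a,b,c)$, to conclude $\sum a_i\wedge b_i\wedge c_i=0$ and hence to rewrite $x$ as a sum of classes $k_{a,a,b}$ with a repeated entry. For such a class, $\Theta(x)=0$ becomes the explicit relation $a\otimes\{a,b\}=b\otimes\{a,a\}$ in $\fff\otimes K_2^M(F)$, and applying $\Psi$ to both sides produces an identity among explicit cycles in $H_3(\GL_3)$ from which ${\inc_1}_\ast(x)=0$ follows by a short computation. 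Without this reduction to repeated-entry generators, the ``careful bookkeeping'' you defer to has no foothold, so as written the hard direction is asserted rather than proved.
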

\begin{proof}
By Theorem \ref{kernel}, the kernel of ${\inc_1}_\ast: H_3(\GL_2) \larr H_3(\GL_3)$
consists of elements of the form $\sum k_{a,b,c}+k_{b,a,c}$ such that
 \[
 \sum a\otimes \{b, c\} +b\otimes \{a, c\}=0 \in \fff \otimes K_2^M(F).
 \]
By Lemma \ref{theta}, we see that
\[
\varphi(\sum k_{a,b,c}+k_{b,a,c})= \sum a\otimes \{b, c\} +b\otimes \{a, c\}=0.
\]
Since the sequence
\[
H_3(\SL_2)_\fff \overset{{\inc_2}_\ast}{\larr}
H_3(\GL_2)/H_3(\GL_1) \larr H_1(\fff, H_2(\SL_2)) \larr 0,
\]
is exact, $\sum k_{a,b,c}+k_{b,a,c} \in \im({\inc_2}_\ast)$. Therefore
\[
\ker({\inc_1}_\ast) \se \im({\inc_2}_\ast)\cap \Big(\fff \cup H_2(\GL_1)\Big).
\]

Now let $x \in \im({\inc_2}_\ast)\cap \Big(\fff \cup H_2(\GL_1)\Big)$.
Then $x$ is of the following form
\[
x=\sum {\rm \bf{c}} (\diag(a_i,1), \diag(1,b_i),\diag(1,c_i)).
\]
Thus $\det_\ast(x)=\sum {\rm \bf{c}} (a_i, b_i,c_i)=0$, where
$\det_\ast: H_3(\GL_2) \larr H_3(\fff)$ is induced by the determinant.
By the inclusion $\bigwedge_\z^3 \fff \hookrightarrow H_3(\fff)$, we have
$a\wedge b \wedge c \mt {\rm \bf{c}}(a, b,c)$
(see for example \cite[Lemma 5.5]{suslin1991}). Thus
\[
\sum a_i \otimes b_i\otimes c_i=\sum a' \otimes a'\otimes b'+
\sum a'' \otimes b''\otimes a'' +\sum b''' \otimes a'''\otimes a'''.
\]
Under the composition
$\fff^{\otimes 3} \larr \fff \otimes H_2(\fff) \larr H_3(\GL_2)$
defined by
\[
 a \otimes b \otimes c \mt a \otimes {\rm \bf{c}} (b,c) \mt
 {\rm \bf{c}} (\diag(a,1), \diag(1,b),\diag(1,c))=k_{a,b,c},
\]
we see that $x$ has the following form
\[
x=\sum {\rm \bf{c}} (\diag(a,1), \diag(1,a),\diag(1,b)).
\]
For simplicity, we assume that
$x={\rm \bf{c}} (\diag(a,1), \diag(1,a),\diag(1,b))$. By Lemma \ref{theta},
$\Theta(x)= a\otimes \{a, b\} - b \otimes \{a, a\}=0$. Thus
\[
\begin{array}{c}
\vspace{1.5mm}
{\rm \bf{c}} (\diag(a,1,1), \diag(1,a,1),\diag(1,b, b^{-1}))\\
\vspace{1.5mm}
=\Psi(a\otimes \{a, b\})=\Psi(b\otimes \{a, a\})=\\
{\rm \bf{c}} (\diag(b,1,1), \diag(1,a,1),\diag(1,a,a^{-1})),
\end{array}
\]
and so
\[
\begin{array}{c}
\vspace{1.5mm}
+{\rm \bf{c}} (\diag(a,1,1), \diag(1,a,1),\diag(1,b, 1))\\
\vspace{1.5mm}
-{\rm \bf{c}} (\diag(a,1,1), \diag(1,a,1),\diag(1,1, b))\\
\vspace{1.5mm}
=\\
-{\rm \bf{c}} (\diag(b,1,1), \diag(1,a,1),\diag(1,1,a)).
\end{array}
\]
Hence in $H_3(\GL_3)$ we have
\[
\begin{array}{ll}
\vspace{1.5mm}
{\inc_1}_\ast(x)\!\!\!\!& ={\rm \bf{c}} (\diag(a,1,1), \diag(1,a,1),\diag(1,b,1))\\
\vspace{1.5mm}
&={\rm \bf{c}} (\diag(a,1,1), \diag(1,a,1),\diag(1,1,b))\\
\vspace{1.5mm}
&\ -{\rm \bf{c}} (\diag(b,1,1), \diag(1,a,1),\diag(1,1, a))\\
&=0
\end{array}
\]
Therefore $x \in \ker({\inc_1}_\ast)$ and this completes the proof of the proposition.
\end{proof}

\section{The indecomposable part of $K_3(F)$}

Define the {\it pre-Bloch group} $\ppp(F)$ of $F$ as the quotient of
the free abelian group $Q(F)$ generated by symbols $[a]$, $a \in \fff-\{1\}$,
by the subgroup generated by elements of the form
\[
[a] -[b]+\bigg[\frac{b}{a}\bigg]-\bigg[\frac{1- a^{-1}}{1- b^{-1}}\bigg]
+ \bigg[\frac{1-a}{1-b}\bigg],
\]
where $a, b \in \fff-\{1\}$, $a \neq b$. Define
\[
\lambda': Q(F) \larr \fff \otimes \fff, \ \ \ \ [a] \mapsto a \otimes (1-a).
\]
By a direct computation, we have
\[
\lambda'\Big(
[a] -[b]+\bigg[\frac{b}{a}\bigg]-\bigg[\frac{1- a^{-1}}{1- b^{-1}}\bigg]
+ \bigg[\frac{1-a}{1-b}\bigg] \Big)
=a \otimes \bigg( \frac{1-a}{1-b}\bigg)+\bigg(\frac{1-a}{1-b}\bigg)\otimes a.
\]
Let $(\fff \otimes \fff)_\sigma :=\fff \otimes \fff/
\lan a\otimes b + b\otimes a: a, b \in \fff \ran$.
We denote the elements of $\ppp(F)$ and $(\fff \otimes \fff)_\sigma$
represented by $[a]$ and $a\otimes b$ again by $[a]$ and $a\otimes b$,
respectively. Thus we have a well-defined map
\[
\lambda: \ppp(F) \larr (\fff \otimes \fff)_\sigma, \ \ \
[a] \mapsto a \otimes (1-a).
\]
The kernel of $\lambda$ is called the {\it Bloch group} of $F$ and is
denoted by $B(F)$. Therefore we obtain the exact sequence
\[
0 \larr B(F) \larr \ppp(F) \larr (\fff \otimes \fff)_\sigma \larr K_2^M(F) \larr 0.
\]

The following remarkable theorem is due to Suslin \cite[Theorem 5.2]{suslin1991}.

\begin{thm}\label{bloch-wigner-suslin}
Let $F$ be an infinite field. Then we have the exact sequence
\[
0 \larr \tors(\mu(F), \mu(F))^\sim \larr K_3(F)^\ind \larr B(F) \larr 0,
\]
where $\tors(\mu(F), \mu(F))^\sim $ is the unique nontrivial extension of
the group $\tors(\mu(F), \mu(F))$ by $\z/2$ if $\char(F)\neq 2$ and is equal to
$\tors(\mu(F), \mu(F))$ if $\char(F)= 2$.
\end{thm}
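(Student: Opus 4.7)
The plan is to follow the strategy of Suslin in \cite{suslin1991}. Combining the Hurewicz exact sequence recalled in the introduction with the inclusion $l(-1)K_2(F)\se\im(K_3^M(F)\arr K_3(F))$, one obtains a canonical identification $K_3(F)^\ind\simeq H_3(\SL(F))/\im(K_3^M(F))$, so it is enough to exhibit the desired four-term sequence with the middle term replaced by this quotient.

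For the surjection onto $B(F)$, I would construct a Bloch--Wigner type homomorphism $H_3(\SL(F))\arr\ppp(F)$ using the spectral sequence attached to the action of $\SL_n(F)$ (for $n=2$ and $3$) on the simplicial complex whose $p$-simplices are $(p+1)$-tuples of vectors of $F^n$ in general position. In low degrees, this produces a map sending a cycle built from configurations of four points on $\pp^1(F)$ to the formal symbol $[r]\in\ppp(F)$ of their cross-ratio; the five-term relation defining $\ppp(F)$ is exactly the vanishing on boundaries, and a further check shows the image lies in $B(F)$. Killing $\im(K_3^M(F))$ on the source is precisely what makes the composition with $\lambda$ vanish. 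Surjectivity of the resulting map to $B(F)$ should follow by lifting any $\sum n_i[a_i]\in B(F)$ to an explicit $3$-cycle assembled from configurations on $\pp^1$ with the prescribed cross-ratios.

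The main obstacle is identifying the kernel with $\tors(\mu(F),\mu(F))^\sim$. The candidate classes come from the subgroup $T=\{\diag(\zeta,\zeta^{-1}):\zeta\in\mu(F)\}\subset\SL_2(F)$: via the Pontryagin product, the map $H_3(T\times T)\arr H_3(\SL(F))$ naturally contributes a copy of $\tors(\mu(F),\mu(F))$, and that this image exhausts the kernel should drop out of the same spectral-sequence analysis together with Suslin's stability and injectivity statements for $H_\ast$ of $\SL_n$.

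The subtlest, and in my judgement hardest, step is to pin down the extension class in characteristic different from $2$: the element $-1\in\mu(F)$ together with $l(-1)\in K_1(\z)=\z/2$ conspires to produce a nontrivial $\z/2$-extension, and verifying this nontriviality demands either an explicit cocycle calculation inside $H_3(\SL_2(F))$, tracking how the kernel of the Hurewicz map contributes, or a transfer comparison with the topological case of $\SL(\C)$ where the Bloch--Wigner sequence is classical. I expect that isolating this extension class, rather than constructing the Bloch--Wigner map itself, will be the main technical hurdle.
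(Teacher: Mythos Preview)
The paper does not prove this theorem at all: it is stated with the attribution ``The following remarkable theorem is due to Suslin \cite[Theorem~5.2]{suslin1991}'' and is used as a black box in the proof of the main theorem. There is therefore no in-paper argument to compare your proposal against.

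For what it is worth, your outline is a reasonable high-level sketch of Suslin's original proof in \cite{suslin1991}: the Bloch--Wigner map is indeed produced via the spectral sequence of the $\GL_n$-action on configurations of points in general position, the kernel is analysed using torus contributions, and the identification of the $\z/2$-extension in characteristic $\neq 2$ is the delicate point (Suslin handles it via a careful study of the map $\tors(\mu(F),\mu(F))\arr K_3(F)^\ind$ and the element $\{-1\}$). But since the present paper only \emph{invokes} this result, no proof is expected here and your effort, while not misguided, is unnecessary for the purposes of this paper.
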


The following theorem has been proved in \cite[Theorem 4.4]{mirzaii-yeganeh2011}.

\begin{thm}\label{bloch-wigner-mirzaii-yeganeh}
Let $F$ be an infinite field. Then we have the exact sequence
\[
0 \larr \tors(\mu(F), \mu(F))^\sim \larr \H_3(\SL_2(F)) \larr B(F) \larr 0,
\]
where $\H_3(\SL_2(F)):=H_3(\GL_2)/(H_3(\GL_1) + \fff \cup H_2(\GL_1))$.
\end{thm}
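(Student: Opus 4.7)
The plan is to mirror Suslin's derivation of the Bloch-Wigner exact sequence for $K_3(F)^\ind$ (Theorem~\ref{bloch-wigner-suslin}), but carried out at the level of the explicit quotient $\H_3(\SL_2(F))$. The defining quotients are tailored so that the two spurious pieces of $H_3(\GL_2)$---the $\bigwedge^3\fff$-piece from $H_3(\GL_1)$ and the Milnor-type piece from $\fff\cup H_2(\GL_1)$---are killed, leaving precisely the refined class that should detect the Bloch group.

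First I would construct a surjection $\pi\colon \H_3(\SL_2(F))\twoheadrightarrow B(F)$. The natural vehicle is a spectral sequence arising from the action of $\SL_2(F)$ on $\pp^1(F)$ (or on a complex of ordered tuples of distinct points): its $E^1$-page produces the pre-Bloch group $\ppp(F)$ through the cross-ratio of $4$-tuples, and its $d^1$-differential recovers the five-term relation defining $\ppp(F)$. The composite $H_3(\GL_2)\to H_3(\GL_2)/H_3(\GL_1)\overset{\varphi}{\to} H_1(\fff,H_2(\SL_2))\hookrightarrow \fff\otimes K_2^M(F)$ serves as a lift of $\lambda$, and the further quotient by $\fff\cup H_2(\GL_1)$ kills exactly the image of this composite: by Lemmas~\ref{theta} and~\ref{subgroup}, the restriction of $\varphi$ to the image of $\fff\cup H_2(\GL_1)$ already surjects onto $H_1(\fff,H_2(\SL_2))$. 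Thus $\pi$ lands in $B(F)=\ker(\lambda)$, and surjectivity follows by realizing each $[a]\in\ppp(F)$ as an explicit $3$-cycle on the diagonal torus of $\SL_2(F)$.

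Next, I would identify the kernel of $\pi$ with $\tors(\mu(F),\mu(F))^\sim$ by building a natural comparison map $\H_3(\SL_2(F))\to K_3(F)^\ind$ and using Suslin's sequence. By Theorem~\ref{kernel} and Proposition~\ref{kernel2}, the kernel of $H_3(\GL_2)\to H_3(\GL_3)$ lies inside $\fff\cup H_2(\GL_1)$; hence stabilization---together with the identification $H_3(\GL(F))/H_3(\GL_1)=K_3(F)/l(-1)K_2(F)$ and the subsequent quotient by the image of $K_3^M(F)$---produces the comparison map, compatible with the two projections onto $B(F)$. Since Hutchinson-Tao's surjectivity of $\alpha$ forces this comparison map to be surjective, the remaining task is to construct a section $\tors(\mu(F),\mu(F))^\sim\to\ker(\pi)$ by realizing the torsion classes as explicit $3$-cycles built from roots of unity $\zeta\in\mu(F)$.

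The main obstacle is the extension class when $\char(F)\neq 2$, where Suslin's sequence realizes the unique nontrivial extension of $\tors(\mu(F),\mu(F))$ by $\z/2$. Matching this class with the one produced by $\H_3(\SL_2(F))$ demands identifying an explicit $2$-torsion generator---typically built from $\diag(-1,-1)$ together with $\diag(\zeta,\zeta^{-1})$ for $\zeta\in\mu(F)$---and tracing it through the comparison map. The delicate interplay between the quotient by $\fff\cup H_2(\GL_1)$, the stability maps, and the canonical splitting of the edge sequence for $H_1(\fff,H_2(\SL_2))$ isolated in Lemma~\ref{theta}(i) is where careful bookkeeping of signs and $2$-torsion contributions will be most demanding.
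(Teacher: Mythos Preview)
The paper does not prove this theorem at all: it is quoted verbatim from \cite[Theorem~4.4]{mirzaii-yeganeh2011} and used as a black box in the proof of the main theorem. So there is no ``paper's own proof'' to compare against; the reference is external.

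That said, your proposed strategy has a genuine gap. In your second step you assert that stabilization together with Theorem~\ref{kernel} and Proposition~\ref{kernel2} furnishes a natural comparison map $\H_3(\SL_2(F))\to K_3(F)^\ind$. But immediately after stating Theorems~\ref{bloch-wigner-suslin} and~\ref{bloch-wigner-mirzaii-yeganeh}, the paper says explicitly: ``These two theorems suggest that $K_3(F)^\ind$ and $\H_3(\SL_2(F))$ should be isomorphism. But there is no natural homomorphism from one of these groups to the other one! But there is a natural map from $H_3(\SL_2)_\fff$ to both of them.'' The entire architecture of the paper is built around this obstruction: one cannot pass directly between the two targets, and instead one compares them through the common source $H_3(\SL_2)_\fff$ via $\alpha$ and $\varsigma$. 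Concretely, knowing $\ker({\inc_1}_\ast)\subseteq \fff\cup H_2(\GL_1)$ lets you push $\H_3(\SL_2(F))$ into a quotient of $H_3(\GL_3)$, but it does not give you a map landing in $K_3(F)^\ind$, which is a quotient of $H_3(\SL)$; the images of $H_3(\GL_1)$ and $\fff\cup H_2(\GL_1)$ in $H_3(\GL_3)$ do not line up with the Milnor piece you must kill on the $K$-theory side. If such a comparison map existed and your argument produced an isomorphism, the main theorem of the paper would be an immediate corollary rather than the subject of the paper.

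Your first step (the $\GL_2$-action on configurations in $\pp^1(F)$, cross-ratio, five-term relation) is indeed the standard engine behind results of this type and is presumably how \cite{mirzaii-yeganeh2011} proceeds; but the identification of the kernel with $\tors(\mu(F),\mu(F))^\sim$ must be done intrinsically from that spectral sequence, not by bootstrapping from Suslin's Theorem~\ref{bloch-wigner-suslin}.
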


These two theorems
suggest that $K_3(F)^\ind$ and $\H_3(\SL_2(F))$ should be isomorphism.
But there is no natural homomorphism from one of these groups to the other one!
But there is a natural maps from $H_3(\SL_2)_\fff$ to both of them.
Hutchinson and Tao have proved that $H_3(\SL_2)_\fff  \larr K_3(F)^\ind$
is surjective \cite[Lemma 5.1]{hutchinson-tao2009}. The next lemma claims that
this is also true for the other map.

\begin{lem}\label{sl2}
The map $\varsigma:H_3(\SL_2)_\fff \larr \H_3(\SL_2)$, induced by the natural map
$\SL_2 \larr \GL_2$, is surjective.
\end{lem}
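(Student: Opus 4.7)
The plan is to work with the exact sequence from the spectral sequence analysis at the beginning of Section 1, namely
\[
H_3(\SL_2)_\fff \overset{{\inc_2}_\ast}{\larr} H_3(\GL_2)/H_3(\GL_1) \overset{\varphi}{\larr} H_1(\fff, H_2(\SL_2)) \larr 0.
\]
Surjectivity of $\varsigma$ is equivalent to the statement that
\[
H_3(\GL_2) = \im({\inc_2}_\ast) + H_3(\GL_1) + \fff \cup H_2(\GL_1),
\]
which, modding out by $H_3(\GL_1)$ and using exactness, reduces to showing that $\varphi$ restricted to the image of $\fff \cup H_2(\GL_1)$ in $H_3(\GL_2)/H_3(\GL_1)$ is already surjective onto $H_1(\fff, H_2(\SL_2))$.

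The next step is to identify generators on both sides. By Lemma \ref{subgroup}, viewed as a subgroup of $\fff \otimes K_2^M(F)$, the group $H_1(\fff, H_2(\SL_2))$ is generated by elements of the form $a \otimes \{b,c\} + b \otimes \{a,c\}$ together with elements of the form $2d \otimes \{e,f\}$. On the other hand, by definition of the cup product induced by the diagonal $\fff \times \GL_1 \arr \GL_2$, the subgroup $\fff \cup H_2(\GL_1)$ of $H_3(\GL_2)$ is generated by the elements $k_{a,b,c} = {\rm \bf{c}}(\diag(a,1),\diag(1,b),\diag(1,c))$ for $a,b,c \in \fff$.

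Now I will hit the two families of generators of $H_1(\fff, H_2(\SL_2))$ using Lemma \ref{theta}, which expresses $\Theta = $ (the inclusion $H_1(\fff, H_2(\SL_2)) \harr \fff \otimes K_2^M(F)) \circ \varphi$ on specific cycles. Part (i) of that lemma gives
\[
\Theta(k_{a,b,c} + k_{b,a,c}) = a \otimes \{b,c\} + b \otimes \{a,c\},
\]
so the first family of generators is reached. For the second family, I combine parts (i) and (iii) of Lemma \ref{theta}: summing
\[
\Theta(k_{a,b,c} + k_{b,a,c}) = a \otimes \{b,c\} + b \otimes \{a,c\}
\quad\text{and}\quad
\Theta(k_{c,a,b}) = b \otimes \{a,c\} - a \otimes \{b,c\}
\]
yields $\Theta(k_{a,b,c} + k_{b,a,c} + k_{c,a,b}) = 2\,b \otimes \{a,c\}$, which is exactly the remaining family of generators (after relabeling).

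Since every generator of $H_1(\fff, H_2(\SL_2))$ is in the image under $\varphi$ of a $\z$-linear combination of $k_{x,y,z}$'s, which all lie in $\fff \cup H_2(\GL_1)$, the restricted map $\varphi|_{\fff \cup H_2(\GL_1)}$ is surjective, and the conclusion follows at once from the exact sequence. The only real step is verifying that the three identities in Lemma \ref{theta} suffice to reach the generator $2d \otimes \{e,f\}$; this is the sole point where one must be careful, since it is not visible directly from Lemma \ref{theta}(ii) (whose element $\Phi(a \otimes \{b,c\})$ is not a priori in $\fff \cup H_2(\GL_1)$).
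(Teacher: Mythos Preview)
Your proof is correct and follows essentially the same approach as the paper: both use the spectral-sequence exact sequence together with Lemma~\ref{subgroup} and Lemma~\ref{theta} to show that the generators of $H_1(\fff,H_2(\SL_2))$ are hit by elements lying in $H_3(\GL_1)+\fff\cup H_2(\GL_1)$. The only difference is that for the generator $2d\otimes\{e,f\}$ the paper invokes Lemma~\ref{theta}(ii) (whose element $\Phi(a\otimes\{b,c\})$ equals $\mathbf{c}(\diag(a,1),\diag(b,1),\diag(c,1))-k_{c,a,b}+k_{a,b,c}+k_{b,a,c}$ and hence does vanish in $\H_3(\SL_2)$), whereas you combine parts (i) and (iii) to produce the manifestly cup-product element $k_{a,b,c}+k_{b,a,c}+k_{c,a,b}$; modulo $H_3(\GL_1)$ these are the same computation, and your version makes the vanishing in $\H_3(\SL_2)$ immediate.
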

\begin{proof}
Consider the exact sequence
\[
\begin{CD}
H_3(\SL_2)_\fff \larr H_3(\GL_2)/H_3(\GL_1) \overset{\varphi}{\larr}
H_1(\fff, H_2(\SL_2)) \larr 0.
\end{CD}
\]
By Lemma \ref{theta}, we have
\begin{gather*}
\Theta(k_{a,b,c}+k_{b,a,c}) =a\otimes \{b,c\} +b \otimes \{a,c\}, \\
\Theta({\bf c}(\diag(a, a),\diag(b,1), \diag(c, c^{-1})))=2a\otimes \{b,c\}.
\end{gather*}
Since $H_1(\fff, H_2(\SL_2))$ as a subgroup of
$H_1(\fff, H_2(\SL_3))=\fff \otimes K_2^M(F)$
is generated by elements $a\otimes \{b,c\} +b \otimes \{a,c\}$ and
$2d\otimes \{e,f\}$ and since $a\cup {\bf c}(b, c)=k_{a,b,c}$
vanish in $\H_3(\SL_2)$,  $H_3(\SL_2)_\fff \larr \H_3(\SL_2)$ must be surjective.
\end{proof}

Now we are ready to prove our main theorem.

\begin{thm}
Let $F$ be an infinite field. The following conditions are equivalent.
\par {\rm (i)} The homomorphism $\alpha: H_3(\SL_2)_\fff \larr K_3(F)^\ind$ is bijective.
\par {\rm (ii)} The natural homomorphisms ${\inc_1}_\ast: H_3(\GL_2) \larr H_3(\GL_3)$ and
${\inc_2}_\ast:H_3(\SL_2)_\fff \larr H_3(\GL_2)$ are injective.
\end{thm}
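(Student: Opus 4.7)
The plan is to establish the two implications separately.

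For $(i) \Rightarrow (ii)$: Using Suslin stability $H_3(\SL_3) \simeq H_3(\SL(F))$ and the fact that $\fff$ acts trivially on $H_3(\SL_3)$, the map $\alpha$ factors as $H_3(\SL_2)_\fff \overset{\iota_1}{\to} H_3(\SL_3)_\fff \twoheadrightarrow K_3(F)^\ind$, so $\alpha$ injective forces $\iota_1$ injective. Condition on ${\inc_2}_\ast$ then follows because the splitting of $1 \to \SL_2 \to \GL_2 \to \fff \to 1$ via $a \mapsto \diag(a,1)$ identifies $H_3(\GL_2)/H_3(\GL_1)$ with $\ker(\det_\ast) \subseteq H_3(\GL_2)$, with $\im({\inc_2}_\ast) \subseteq \ker(\det_\ast)$, and $\iota_1$ factors through this quotient followed by ${\inc_1}_\ast$. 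Condition on ${\inc_1}_\ast$ follows from Proposition~\ref{kernel2}: any $y \in \ker({\inc_1}_\ast)$ has $y = {\inc_2}_\ast(x)$, and a chase in the morphism of exact sequences at the start of Section~1 gives $\iota_1(x) = \overline{{\inc_1}_\ast(y)} = 0$, so $x = 0$ by injectivity of $\alpha$, whence $y = 0$.

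For the harder direction $(ii) \Rightarrow (i)$, the first step is to show that under (ii) the surjection $\varsigma : H_3(\SL_2)_\fff \to \H_3(\SL_2)$ of Lemma~\ref{sl2} is injective, hence bijective. The splitting argument combined with Proposition~\ref{kernel2} identifies $\ker(\varsigma)$ with $\im(\mu) \cap \overline{\fff \cup H_2(\GL_1)}$ inside $H_3(\GL_2)/H_3(\GL_1)$, where $\mu$ denotes the induced map from $H_3(\SL_2)_\fff$. By the explicit formulas of Lemma~\ref{theta} (using parts (i), (ii) via the map $\Phi$, and (iii)), the restriction of $\varphi$ to $\overline{\fff \cup H_2(\GL_1)}$ is surjective onto $H_1(\fff, H_2(\SL_2))$; I would then show this restriction is also injective under (ii) by a snake-lemma chase on the morphism of exact sequences from Section~1, combined with Theorem~\ref{hutch-tao} and the Suslin-stability analog of Proposition~\ref{kernel2} at the $\GL_3 \to \GL_4$ level (where the relevant kernel vanishes automatically).

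Once $\varsigma$ is an isomorphism, Theorem~\ref{bloch-wigner-mirzaii-yeganeh} transports a Bloch--Wigner exact sequence $0 \to \tors(\mu(F),\mu(F))^\sim \to H_3(\SL_2)_\fff \to B(F) \to 0$ onto $H_3(\SL_2)_\fff$, parallel to Suslin's Theorem~\ref{bloch-wigner-suslin} for $K_3(F)^\ind$. Both projections to $B(F)$ arise from the same cross-ratio construction on $H_3(\SL_2)$, so $\alpha$ is compatible with them and $\ker(\alpha)$ lies in the $\tors(\mu(F),\mu(F))^\sim$ subgroup. A comparison of the two $\z/2$-extensions of $B(F)$ by $\tors(\mu(F),\mu(F))^\sim$ --- both are the unique nontrivial such extension in the appropriate characteristic --- then shows that $\alpha$ restricts to an isomorphism on $\tors(\mu(F),\mu(F))^\sim$, hence $\alpha$ itself is bijective.

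The main obstacle is the first stage of $(ii) \Rightarrow (i)$: establishing injectivity of $\varsigma$ under (ii). This requires a careful combination of Proposition~\ref{kernel2}, Theorem~\ref{hutch-tao}, and the explicit formulas in Lemma~\ref{theta} to extract the vanishing of $\im(\mu) \cap \overline{\fff \cup H_2(\GL_1)}$ from the two injectivity hypotheses. The subsequent comparison of Bloch--Wigner sequences is then essentially formal.
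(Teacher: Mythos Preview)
Your two implications take quite different routes from the paper, with mixed results.

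\textbf{Direction (i) $\Rightarrow$ (ii).} Your argument here is correct and in fact cleaner than the paper's. You stay entirely internal: factor $\alpha$ through $\iota_1 : H_3(\SL_2)_\fff \to H_3(\SL_3)_\fff$, read off injectivity of ${\inc_2}_\ast$ from the commutative square of Section~1, and then use Proposition~\ref{kernel2} to kill $\ker({\inc_1}_\ast)$. The paper instead passes to the algebraic closure $\bar F$ and invokes the Merkurjev--Suslin injectivity $K_3(F)^\ind \hookrightarrow K_3(\bar F)^\ind$ together with the known results over $\bar F$. Your route avoids this external input entirely.

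\textbf{Direction (ii) $\Rightarrow$ (i).} You have the right global plan (show $\varsigma$ is bijective, then compare the two Bloch--Wigner sequences), but your proposed proof that $\varsigma$ is injective is circuitous and the key step is not justified. You reformulate ``$\im(\mu)\cap\overline{\fff\cup H_2(\GL_1)}=0$'' as ``$\varphi|_{\overline{\fff\cup H_2(\GL_1)}}$ is injective'', but since $\ker\varphi=\im\mu$ these two statements are identical; no progress has been made. You then appeal to a snake-lemma chase together with an unspecified ``$\GL_3\to\GL_4$ analog of Proposition~\ref{kernel2}'', which is vague and whose relevance is unclear.

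The paper's argument here is essentially one line: if $\varsigma(x)=0$ then ${\inc_2}_\ast(x)$ lies in $\im({\inc_2}_\ast)\cap\bigl(\fff\cup H_2(\GL_1)\bigr)$, which by Proposition~\ref{kernel2} equals $\ker({\inc_1}_\ast)$; under hypothesis (ii) this is zero, so ${\inc_2}_\ast(x)=0$, and again by (ii) we get $x=0$. You should replace your detour with this direct application of Proposition~\ref{kernel2}; that is exactly what the proposition is designed for.
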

\begin{proof}
(ii) $\Rightarrow$ (i) Consider the surjective map
$\varsigma: H_3(\SL_2)_\fff \larr \H_3(\SL_2)$ from Lemma \ref{sl2}.
Let $\varsigma(x)=0$.
Then ${\inc_2}_\ast(x) \in \im({\inc_2}_\ast) \cap \fff \cup H_3(\GL_1)$.
But by Proposition \ref{kernel2} and the assumptions
\[
\im({\inc_2}_\ast) \cap \fff \cup H_3(\GL_1)=\ker({\inc_1}_\ast)=0.
\]
From this we have  ${\inc_2}_\ast(x)=0$ and hence $x=0$. Therefore $\varsigma$
is an isomorphism.
Now the claim follows by comparing the exact sequence of Theorem
\ref{bloch-wigner-mirzaii-yeganeh} and
Suslin's Bloch-Wigner exact sequence in Theorem \ref{bloch-wigner-suslin}.

(i) $\Rightarrow$ (ii) Let $\bar{F}$ be the algebraic closure of $F$. By a theorem
of Merkurjev and Suslin, $K_3(F)^\ind \larr K_3(\bar{F})^\ind$ is injective
\cite[Proposition 11.3]{merkurjev-suslin1990}. Thus from the commutative diagram
\[
\begin{CD}
H_3(\SL_2)_\fff\!\! @>>> \!\! H_3(\SL_2(\bar{F}))\\
   @VVV  @VV{\simeq}V  \\
K_3(F)^\ind \!\! @>>> \!\!K_3(\bar{F})^\ind,
\end{CD}
\]
and the injectivity of $\alpha$,
we deduce the injectivity of the map $H_3(\SL_2)_\fff \larr H_3(\SL_2(\bar{F}))$.
Now the injectivity of $H_3(\SL_2)_\fff \larr H_3(\GL_2)$ follows from
the injectivity of $H_3(\SL_2(\bar{F})) \larr H_3(\GL_2(\bar{F}))$
\cite[Theorem 6.1]{mirzaii-2008} and commutativity of the diagram
\[
\begin{CD}
H_3(\SL_2)_\fff\!\! @>>>\!\! H_3(\SL_2(\bar{F})) \\
   @VVV  @VVV  \\
H_3(\GL_2)\!\! @>>> \!\! H_3(\GL_2(\bar{F})).
\end{CD}
\]
On the other hand, by Proposition \ref{kernel2},
$\ker({\inc_1}_\ast) \se H_3(\SL_2)_\fff \se H_3(\GL_2)$.
Let ${\inc_1}_\ast(x)=0$.  It easily follows from the commutative diagram
\[
\begin{CD}
H_3(\SL_2)_\fff\!\! @>>> \!\!H_3(\GL_2) \!\!@>>> \!\!H_3(\GL_3)\\
   @VVV  @VVV @VVV  \\
H_3(\SL_2(\bar{F}))\!\! @>>>\!\! H_3(\GL_2(\bar{F})) \!\!@>>>\!\! H_3(\GL_3(\bar{F})).
\end{CD}
\]
that $x \in \ker\Big(H_3(\GL_2(\bar{F})) \larr H_3(\GL_3(\bar{F}))\Big)=0$.
Therefore $x=0$ \cite[Theorem 5.4(iii)]{mirzaii-2008}.
\end{proof}

\begin{rem}
(i) From the spectral sequence $\ee_{p,q}^2$, one get the exact sequence
\[
\vspace{3mm}
{\hspace{-2.5cm}
H_4(\GL_2)/H_4(\GL_1) \larr H_2(\fff, H_2(\SL_2)) \overset{\DD_{2,2}^2}{\larr} H_3(\SL_2)_\fff}
\]
\[
\hspace{3.2cm}
\larr  H_3(\GL_2)/ H_3(\GL_1) \larr H_1(\fff, H_2(\SL_2)) \larr 0.
\]
Thus the injectivity of ${\inc_2}_\ast$ is equivalent to
triviality of the differential $\DD_{2,2}^2$.
\par (ii)
Theorem \ref{kernel} gives a clear description of elements of the kernel of
${\inc_1}_\ast$. But there is no such information about the kernel of ${\inc_2}_\ast$.
It is easy to see that $s_{a,b,c}:={\rm \bf{c}} (\diag(a,a^{-1}), \diag(b,b^{-1}),\diag(c,c^{-1}))$
is in the kernel of ${\inc_2}_\ast$ and is 2-torsion:
\[
\begin{array}{rl}
\vspace{1.5mm}
s_{a,b,c}\!\!\!\!&={\rm \bf{c}} (\diag(a,a^{-1}), \diag(b,b^{-1}),\diag(c,c^{-1}))\\
\vspace{1.5mm}
&={\rm \bf{c}} (w.\diag(a,a^{-1}).w^{-1}, w.\diag(b,b^{-1}).w^{-1},w.\diag(c,c^{-1}).w^{-1})\\
\vspace{1.5mm}
& ={\rm \bf{c}} (\diag(a^{-1},a), \diag(b^{-1},b),\diag(c^{-1},c))\\
\vspace{1.5mm}
& =-s_{a,b,c},
\end{array}
\]
where $w:={\mtx 0 {-1} 1 0}$. But it is not clear to us why it should be zero.
It is not difficult to see that $\ker({\inc_2}_\ast)$ is a 2-power torsion group
(see for example the proof of Theorem 6.1 in \cite{mirzaii-2008}).
\end{rem}


\bigskip
\address{{\footnotesize

Department of Mathematics,

Institute for Advanced Studies in Basic Sciences,

P. O. Box. 45195-1159, Zanjan, Iran.

email:\ bmirzaii@iasbs.ac.ir

}}

\begin{thebibliography}{99}


\bibitem{bloch2000}
Bloch, S. J. Higher Regulators, Algebraic $K$-Theory and Zeta
Functions of Elliptic Curves. CRM Monograph Series. 11.
Providence, RI: American Mathematical Society (AMS), 2000.


\bibitem{brown1994} Brown, K. S. Cohomology of Groups.  Graduate
Texts in Mathematics, 87. Springer-Verlag, New York, 1994.


\bibitem{elbaz1998}
Elbaz-Vincent, P. The indecomposable $K\sb 3$
of rings and homology of ${\rm SL}\sb 2$. J. Pure Appl. Algebra {\bf 132}
(1998), no. 1, 27--71.



\bibitem{hutchinson-tao2009}
Hutchinson, K., Tao, L.
The third homology of the special linear group of a field. J. Pure Appl.
Algebra {\bf 213} (2009), no. 9, 1665--1680.





\bibitem{merkurjev-suslin1990}
Merkurjev, A. S., Suslin, A. A. The group $K_3$ for a field.
Math. USSR-Izv. {\bf 36} (1991), no. 3, 541--565.



\bibitem{mirzaii-2008}
Mirzaii, B. Third homology of general linear groups. J. Algebra.
{\bf 320} (2008), no. 5, 1851--1877.

\bibitem{mirzaii-2011}
Mirzaii, B. Third homology of general linear groups over rings with manuy units.
Preprint,  available at http://arxiv.org/abs/1108.5454

\bibitem{mirzaii-yeganeh2011}
Mirzaii, B., Yeganeh, Fatemeh. Bloch-Wigner theorem over rings with many units II.
Preprint, available at http://arxiv.org/abs/1108.5452


\bibitem{sah1989} Sah, C. Homology of classical Lie groups made discrete.
III. J. Pure Appl. Algebra {\bf 56} (1989), no. 3, 269--312.


\bibitem{suslin1991} Suslin, A. A. $K\sb 3$ of a field and the Bloch group.
Proc. Steklov Inst. Math. 1991, {\bf 183} no. 4, 217--239.


\end{thebibliography}
\end{document}